\theoremstyle{plain}  
\newtheorem{thm}{Theorem}[section]
\newtheorem{lem}[thm]{Lemma}
\theoremstyle{definition}
\newtheorem{xmp}[thm]{Example}  
\newtheorem{rem}[thm]{Remark}
\newcommand\bs[1]{{\boldsymbol #1}}
\newcommand\C{{\mathbb C}}
\newcommand\F{{\mathbb F}}
\newcommand\R{{\mathbb R}}
\newcommand\Fn{{\mathbb F}_{(n)}}
\newcommand\Cn{{\mathbb C}_{(n)}}
\newcommand\cA{{\mathcal A}}
\newcommand\cC{{\mathcal C}}
\newcommand\cH{{\mathcal H}}
\newcommand\cL{{\mathcal L}}
\newcommand\cM{{\mathcal M}}
\newcommand\cR{{\mathcal R}}
\newcommand\pd{\partial}
\renewcommand\d{{\delta}}
\newcommand\g{{\gamma}}
\newcommand\G{{\Gamma}}
\newcommand\om{{\omega}}
\newcommand\Om{{\Omega}}
\newcommand\s{{\sigma}}
\newcommand\z{{\zeta}}
\newcommand\bA{{\bs A}}
\newcommand\bx{\bs x}
\newcommand\xchi{\smash{\raise 0.5ex\hbox{$\chi$}}}
\author{Brian Jefferies}
\title{Holomorphic functions on the Lie ball and their monogenic counterparts}
 \address{School of Mathematics\\ UNSW\\ NSW 2042
  AUSTRALIA}
 \email{brian.jefferies@gmail.com}
\begin{document}

\begin{abstract} 
The Cauchy integral formula in Clifford analysis allows us to associate a holomorphic function
$\tilde f:L_n\to \C$ on the Lie ball $L_n$ in $\C^n$ with its monogenic counterpart $f:B_1(0)\to \C^{n+1}$
via the formula  
$$\tilde f(z) = \int_{rS^n}G_\om(z)\bs n(\om)f(\om)\,d\mu(\om),\quad z\in rL_n,\  0 < r <1.$$
The inverse map $\tilde f\mapsto f$ is constructed here using the Cauchy-Hua formula for
the Lie ball following the work of M. Morimoto \cite{Mori2}.
\end{abstract}

\baselineskip 16pt
\maketitle

\section{Introduction}
The correspondence between monogenic functions and holomorphic functions
of severable variables is essential for the study of the spectral theory of systems
of linear operators $\bA = (A_1,\dots,A_n)$ acting in a Banach space $X$. Functions $f(\bs A)$
of the $n$-tuple $\bA$ are formed by the analogue
\begin{equation}\label{eqn:1}
f(\bA) = \int_{\pd\Om} G_\om(\bA)\bs n(\om)f(\om)\,d\mu(\om)
\end{equation}
of the Riesz-Dunford functional calculus 
$$\varphi(T) = \frac1{2\pi i}\int_C (\lambda I-T)^{-1}\varphi(\lambda) \, d\lambda$$
for a single operator $T$. The simple contour $C$ surrounds the spectrum $\s(T)$
of $T$ and is contained in the domain $U$ of the holomorphic function $\varphi:U\to\C$.
In formula (\ref{eqn:1}), $\Omega$ is an open subset $\R^{n+1}$ whose boundary $\pd\Om$ 
is an oriented $n$-manifold with surface measure $\mu$ and outward unit normal 
$\bs n(\om)$ at $\om\in \pd\Om$. The Cauchy kernel
$\om\mapsto G_\om(\bs A)$ is the analogue of the resolvent $\lambda \mapsto (\lambda I-T)^{-1}$
of the operator $T$ and in the case $n=1$ where $\R^2$ is identified with $\C$, we have
$$G_\lambda(T) = \frac1{2\pi }(\lambda I-T)^{-1}.$$
In higher dimensions, various methods are used to define the Cauchy kernel $G_\om(\bs A)$ depending
on the properties of the $n$-tuple $\bA$ of operators and the complement $\g(\bA)\subset \R^{n+1}$
of the domain of $\om \mapsto G_\om(\bs A)$ is the \textit{monogenic spectrum} of $\bA$.
The function $f$ in formula (\ref{eqn:1}) is left monogenic in a neighbourhood of $\overline \Om$ in the sense that
it satisfies a higher dimensional analogue of the Cauchy-Riemann equations.

For bounded linear operators $A_1,\dots,A_n$ acting on $X$ that do not necessarily commute with each other, a polynomial $p:\R^n\to \C$
has a unique left monogenic extension $\tilde p$ to $\R^{n+1}$ and $\tilde p(\bA):X\to X$
is the linear operator formed by taking corresponding linear combinations of \textit{symmetric} operator products. For example, if $p(x_1,x_2) = x_1x_2$,
then $\tilde p(A_1,A_2) =\frac12(A_1A_2+A_2A_1)$.
More generally if $\varphi:U \to \C$ is a holomorphic function of $n$ complex variables with a sufficiently large domain $U\subset \C^n$, then we may define the operator $\varphi(\bA)$ to be the operator
$f_\varphi(\bA)$ given by formula (\ref{eqn:1})
for the unique monogenic extension $f_\varphi$ of $\varphi$ restricted to $U\cap\R^n$, when this is a nonempty
open subset of $\R^n$.

In order for the operator theory to make sense beyond polynomial functions, we require a clear understanding of the correspondence between
a holomorphic function $\varphi$ and its monogenic counterpart $f_\varphi$. The setup established in this note
is summarised by the following diagram with respect to the open unit ball
$B_1(0) = \{x\in\R^{n+1} : |x| < 1\ \}$ in $\R^{n+1}$, $n=1,2,\dots$, and the 
\textit{Lie ball} 
$$L_k=\{\z\in\C^{k} : |\z|^2 + \sqrt{|\z|^4-|\z_1^2+\dots+\z_k^2|^2} < 1\ \} $$ 
in $\C^k$, $k=1,2,\dots$ (the Lie ball is an example of a Cartan domain and hermitian symmetric 
space\footnote{\text{https://en.wikipedia.org/wiki/Hermitian\_symmetric\_space}}):
\begin{equation}\label{diag}
\begin{tikzcd}
&(L_{n+1},\tilde f_\varphi)\ar[d, "v"]\\
(L_n,\varphi)\ar[r, "u"]\ar[ru, "w"]&(B_1(0),f_\varphi)
\end{tikzcd}
\text{ or }
\begin{tikzcd}
&\cC\cR(L_{n+1})\ar[d, "v"]\\
\cH(L_n)\ar[r, "u"]\ar[ru, "w"]&\cM(B_1(0)).
\end{tikzcd}
\end{equation}
The map $v$ is just the restriction mapping $v:\tilde f_\varphi\mapsto f_\varphi$,  $f_\varphi=\tilde f_\varphi\restriction B_1(0)$ that sends left complex regular functions on $L_{n+1}$ (the function space $\cC\cR(L_{n+1})$) to left monogenic functions defined on $B_1(0)$
(the function space $\cM(B_1(0))$). The Cauchy integral formula shows that $v$ is a bijection. We seek to
construct the map $w$ that sends a holomorphic function $\varphi:L_n\to \C$ to
its complex regular extension  $\tilde f_\varphi\in \cC\cR(L_{n+1})$. 

In the case that the monogenic spectrum
$\g(\bA)$ of the $n$-tuple $\bA$ is contained in $B_1(0)$, the operator $\varphi(\bA)$ is defined by
$\varphi(\bA)e_0 = f_\varphi(\bA)$ for each $\varphi \in \cH(L_n)$. Even if 
$$\|\bA\|=\left(\|A_1\|^2+\dots+\|A_n\|^2\right)^\frac12 < 1,$$ 
we can only conclude that $\g(\bA)\subseteq B_{1+\sqrt2}(0)$ \cite[Equation (4.9)]{J}. 

Under the spectral reality condition $\s\big(\langle \bA,\xi\rangle\big)\subset \R$, $\xi\in \R^n$ (or briefly, $\bA$ is a \textit{hyperbolic}
$n$-tuple of operators), the Cauchy kernel $G_\om(\bs A)$ is defined via a plane wave formula,
$\g(\bA) \subset \R^n \equiv \{0\}\times \R^n$ and the corresponding function space diagram
\[
\begin{tikzcd}
&(\{0\}\times\g(\bA),\tilde f_\varphi)\ar[d, "v"]\\
(\g(\bA),\varphi)\ar[r, "u"]\ar[ru, "w"]&(\{0\}\times\g(\bA),f_\varphi)
\end{tikzcd}
\text{ or }
\begin{tikzcd}
&\cC\cR(\{0\}\times\g(\bA))\ar[d, "v"]\\
\cH(\g(\bA))\ar[r, "u"]\ar[ru, "w"]&\cM(\{0\}\times\g(\bA))
\end{tikzcd}
\]
is simply obtained from the Cauchy-Kowaleski extension of $\varphi$  \cite[Sections 3.5, 4.2]{J}---
strong spectral properties of the $n$-tuple $\bA$ serve to simplify the associated function theory.

The functional calculus $f\mapsto f(\bs A)$ for the $n$-tuple $\bA$ was introduced to motivate
the function space problems considered here and we only mention it again briefly in Section 5 as an
application of the function theory represented in diagram (\ref{diag}). Further details
of the functional calculus are in \cite{J} and may also appear in a future monograph of the author.

We start in Section 2 with a brief review of Clifford analysis and complex regular functions. 
 In Section 3, an elementary calculation establishes that the map $v$ in diagram (\ref{diag})
 is a bijection. In Section 4, the Cauchy-Hua representation for functions holomorphic in $L_n$ in terms of 
their distributional boundary values is outlined and a simple calculation allows us to construct the map $u=v\circ w$
in diagram (\ref{diag}).

\section{Clifford Analysis}
A
Clifford algebra\index{Clifford!algebra|(} $\cA$ with $n$ generators
is formed by taking the smallest real or complex algebra $\cA$ with an identity
element $e_0$ such that $\R\oplus\R^n$ is embedded in $\cA$ via the
identification of $(x_0,\bs x)\in\R\oplus\R^n $ with $x_0 e_0+\bs x\in \cA$ 
and the identity
$$\bs x^2 = -|\bs x|^2e_0 = -(x_1^2+x_2^2+\cdots+x_n^2)e_0$$
holds for all $\bs x\in\R^n$. Then we arrive at the following
definition.

Let $\F$ be either the field $\R$ of real numbers or the
field $\C$ of complex numbers. The {\it Clifford algebra\index{Clifford!algebra}} 
$\Fn$\glossary{$\Fn$} over $\F$ is a 
$2^n$-dimensional algebra with unit defined as follows. Given the standard basis vectors
$e_0, e_1,\dots,e_n$ of the vector space $\F^{n+1}$, the basis vectors $e_S$ of
$\Fn$ are indexed by all finite subsets $S$ of $\{1,2,\dots,n\}$. The basis vectors are
determined by the following rules for multiplication on $\Fn$:
\begin{eqnarray*}
e_0 = 1,\qquad&&\cr 
e_j^2 = -1,\qquad&&{\rm for}\quad 1 \leq j \leq n\cr 
e_je_k =
-e_ke_j = e_{\{j,k\}},\qquad&&{\rm for}\quad 1 \leq j < k  \leq n\cr
e_{j_1}e_{j_2}\cdots e_{j_s} = e_S,\qquad&&
{\rm if}\quad 1 \leq j_1 < j_2 < \dots<
j_s
\leq n\\
&&\qquad\qquad{\rm and}\quad S=\{j_1,\dots,j_s\}.
\end{eqnarray*}
\glossary{$e_S$}
Here the identifications $e_0 =
e_\emptyset$ and $e_j = e_{\{j\}}$ for $1 \leq j \leq n$ have been made.

Suppose that $m \leq n$ are positive integers. The vector space $\R^m$ is identified with a
subspace of $\Fn$ by virtue of the embedding $(x_1,\dots,x_m)\longmapsto \sum_{j=1}^mx_je_j$. On
writing the coordinates of $x\in\R^{n+1}$ as $x = (x_0,x_1,\dots,x_n)$, the space $\R^{n+1}$ is
identified with a linear subspace of $\Fn$ with the embedding $(x_0,x_1,\dots,x_n)\longmapsto
\sum_{j=0}^nx_je_j$.

The product of two elements $u = \sum_S u_S e_S$  and $v = \sum_S v_S  e_S,
v_S
\in \F$ with coefficients $u_S \in \F$ and $v_S \in \F$ is $uv = \sum_{S,R}u_S v_R e_S e_R$. According
to the rules for multiplication,
$e_Se_R$ is $\pm1$ times a basis vector of $\Fn$. The {\it scalar part} of $u = \sum_S u_S e_S,
u_S
\in \F$ is the term $u_\emptyset$, also denoted as $u_0$.

The Clifford algebras $\R_{(0)},\R_{(1)}$ and $\R_{(2)}$ are the real, 
\glossary{$\R_{(0)}$}\glossary{$\R_{(1)}$}\glossary{$\R_{(2)}$}
complex numbers and the
quaternions\index{quaternions}, respectively. In the case of $\R_{(1)}$, the vector $e_1$ is identified with
$i$ and for $\R_{(2)}$, the basis vectors $e_1, e_2, e_1e_2$ are 
identified with $i,j,k$ respectively. 
 
The conjugate $\overline{{e_S}}$ of a basis element $e_S$ is defined so that $e_S\overline {e_S}
=
\overline {e_S}e_S = 1$. Denote the complex conjugate of a number $c \in \F$ by
$\overline c$. Then the operation of {\it conjugation\index{Clifford!conjugation}} 
$u\longmapsto\overline{u}$\glossary{$\overline{u}$} defined by
$\overline u = \sum_S \overline{u_S}\,\overline{e_S}$ for every $u = \sum_S u_S e_S, u_S \in
\F$ is an involution of the Clifford algebra $\Fn$ and $\overline{vu}=\overline{u}\,
\overline{v}$ for all elements $u$ and $v$ of $\Fn$. Because $e_j^2=-1$, the conjugate 
$\overline{e_j}$ of $e_j$ is $-e_j$.
An inner product is defined on $\Fn$ by
the formula $(u,v) = [u\overline{v}]_0 = \sum u_S\overline{v_S}$ for every $u = \sum_S u_S e_S$
and $v =
\sum_S v_S e_S$ belonging to $\Fn$. The corresponding norm is written as $|\cdot|$.

The induced norm on $\F^{n+1}\subset \Fn$ is the usual Euclidean norm
$$|x|=|(x_0,\dots,x_n))| = \sqrt{|x_0|^2+ \dots+ |x_n|^2},$$
so that $|x|^2= [x\overline x]_0$.
For $\z\in \C^{n+1}$ with $\z =\z_0e_0+\bs\z$, $\bs\z =\z_1e_1+\dots+\z_ne_n$,
we write $\overline{\z}^\C = \z_0e_0-\bs\z$ and $|\z|_\C$ for the square root of 
$[\z\overline{\z}^\C]_0 =\z_0^2+\dots+\z_n^2$ with positive real part
if $\z\overline{\z}^\C\in\C\setminus(-\infty,0]$ and $|0|_\C=0$.
For $\bs\z$, $\bs\chi\in \C^n$, $\langle\bs\z,\bs\chi\rangle = [\bs\z\overline{\bs\chi}^\C]_0
= \z_1\chi_1+\dots+ \z_n\chi_n$ defines a bilinear form on $\C^n$.

Because $x = (x_0,x_1,\dots,x_n)\in\R^{n+1}$ is identified
with the element $\sum_{j=0}^n x_je_j$ of $\R_{(n)}$, the conjugate $\overline x$
of $x$ in $\R_{(n)}$ is $x_0e_0 -x_1e_1-\cdots -x_ne_n$. A useful feature of Clifford algebras
is that a nonzero vector $x\in \R^{n+1}$ has an inverse $x^{-1}$ in the algebra
$\R_{(n)}$ (the {\it Kelvin inverse\index{Kelvin inverse}}) given by 
$$
x^{-1} = \frac{\overline{x}}{|x|^2} = \frac{x_0e_0-x_1e_1-\cdots -x_ne_n}
{x_0^2+x_1^2+\cdots +x_n^2}.
$$
The vector $x = (x_0,x_1,\dots,x_n)\in\R^{n+1}$ will often be written as  $x = x_0e_0 +\bs x$ with
$\bs x= (x_1,\dots,x_n)\in\R^n$.

The operator of
differentiation of a scalar function in the
$j$th coordinate in $\F^{n+1}$ is denoted by $\pd_j$
for $j=0,1,\dots,n$.

A continuously differentiable function $f:U \to \Cn$ with $f =
\sum_Sf_Se_S$ defined in an
open subset $U$ of $\F^{n+1}$, the functions $Df$ and $fD$ are defined by 
\begin{align*}
Df &= \sum_S\left((\pd_0
f_S)e_S+\sum_{j=1}^n(\pd_j f_S)e_je_S\right)\\
fD &= \sum_S\left((\pd_0
f_S)e_S+\sum_{j=1}^n(\pd_j f_S)e_Se_j\right).
\end{align*} 
Then $f$ is said to be 
{\it left monogenic\index{function!left monogenic|(}} in
$U$ if
$Df(x) = 0$ for all $x\in U$ and {\it right monogenic\index{function!right monogenic}} in $U$ if
$fD(x) = 0$ for all $x\in U$ in the case $\F=\R$ and {\it two-sided monogenic} for both left and right monogenic functions. In the complex case $\F=\C$, we use the
term \textit{complex regular} instead of monogenic. The collection of all such $\Cn$-valued functions is written as $\cM(U)$
in the left monogenic case $\F=\R$ and $\cC\cR(U)$ in the left complex regular case $\F=\C$. They form
 Fr\'echet spaces with the compact-open topology and pointwise vector operations.

The operator conjugate to $D$ is defined by
$$\overline D =e_0\pd_0 - \sum_{j=1}^ne_j\pd_j$$
so that the Laplace operator $\Delta =\pd_0^2+\dots+\pd_{n}^2$ in $\R^{n+1}$ has factorisations
$$\Delta = D\overline D=\overline D D.$$
Consequently, if $u$ is a harmonic function then $\overline D u$ is a left monogenic and
$u\overline D$ is right monogenic function.
\subsection{Cauchy integral formula}
Let $\sigma_n$ denote the volume
${2\pi^{\frac{n+1}{2}}/\Gamma\left(\frac{n+1}{2}\right)}$ of the unit
$n$-sphere $S^n=\{x\in \R^{n+1}: |x|=1 \}$ in $\R^{n+1}$. If $\G_{n+1}$ is the fundamental solution
of the Laplace operator in $\R^{n+1}$ satisfying $\Delta \G_{n+1}=\delta_0$, then
\begin{align*}
E(x) &= (\overline D\G_{n+1})(x)\\
&= \frac{1}{\sigma_n}\, \frac{\overline x}{|x|^{n+1}}
\end{align*}
defined for all $x\ne 0$ belonging to $\R^{n+1}$ is the fundamental solution of
the operator $D$, that is, $DE = \delta_0e_0$ in the sense of Schwartz distributions.

Let $G_\om(x) = E(\om-x)$ for all $\omega\neq x$ belonging to $\R^{n+1}$. For operator theory,
the time honoured strategy is to replace the vector $x$ by the $n$-tuple $\bA$ in a sensible way
to obtain the Cauchy kernel $G_\om(\bA)$, depending on the joint spectral properties of $\bA$.
In the case $n=1$, the equality $G_\om(x)=\frac1{2\pi}(\om-x)^{-1}$ holds.

Suppose that $f:U\to\Cn$ is a left monogenic function defined on an open subset $U$ of $\R^{n+1}$.
Let $\Omega$ be a bounded open set with $\overline \Omega \subset U$ and a boundary $\pd\Omega$
which is an orientable $n$-dimensional manifold with hypersurface measure $\mu$ and outward unit normal
$\bs n(\om)$ at $\om \in \pd\Omega$. Then
$$f(x)=\int_{\pd \Omega}G_\omega(x)\bs n(\omega)f(\omega)\,d\mu(\omega)$$
for every $x\in\Omega$ and the integral is zero otherwise.

If we define
$$G_\omega(\z) = \frac{1}{\sigma_n}\, \frac{\overline{\om-\z}^\C}{|\om-\z|_\C^{n+1}}$$
and
$\tilde f(\z)=\int_{\pd \Omega}G_\omega(\z)\bs n(\omega)f(\omega)\,d\mu(\omega)$
for every $\z\in\C^{n+1}$ with 
\begin{align}
&\g_\C(\z)=\{x\in \R^{n+1}: |x-\z|_\C^2=0\} \subset \Omega\quad\text{  ($n$ odd) or }\label{odd}\\\
&\g_\C(\z)= \{x\in \R^{n+1}: |x-\z|_\C^2\in (-\infty,0]\} \subset \Omega\quad\text{  ($n$  even)},\label{even}
 \end{align}
 then
 \begin{align*}
D\tilde f&= D\int_{\pd \Omega}G_\omega\bs n(\omega)f(\omega)\,d\mu(\omega)\\
 &= \int_{\pd \Omega}(DG_\omega)\bs n(\omega)f(\omega)\,d\mu(\omega)
 =0,
 \end{align*}
 so $\tilde f$ is a left complex regular function on the open set $\kappa(\Omega)$ of $\z\in\C^{n+1}$ where
 either conditions (\ref{odd}) or (\ref{even}) hold. In the integrals above, the function
 $\omega\mapsto G_\omega$, $\omega\in \pd\Omega$ has values in $C^\infty(\kappa(\Omega))$
 and $\cC\cR(\kappa(\Omega))$,
 facilitating the application of the differentiation operator $D$ under the integral sign and giving the zero function.
 
 Because $\kappa(\Omega)\cap \R^{n+1} = \Omega$ if $n$ is either odd or even, $\tilde f\restriction \Omega = f$
 ensuring that the map $f\mapsto \tilde f$ is a bijection from $\cM(\Omega)$ onto $\cC\cR(\kappa(\Omega))$.

With the $\C^{n+1}$-valued $n$-form
$$\bs \om(d\z)=\sum_{j=0}^ne_j d\z_0\wedge\dots\wedge\widehat{d\z_j}\wedge\dots\wedge d\z_n,$$
the Cauchy integral formula for a left complex regular function $f:U\to \Cn$  is
$$f(z) = \int_{\partial M}G_\z(z)\bs \om(d\z)f(\z),\quad  z\in M .$$

The closure $\overline M$ of the real $(n+1)$ manifold $M$ is contained in the open subset $U$
of $\C^{n+1}$ and its boundary $\partial M$ is an orientable $n$-manifold.
 
\section{The Lie Ball}
Let $v:\cC\cR(L_{n+1})\to \cM(B_1(0))$ be the mapping restricting a left complex regular function on 
the Lie ball to a left monogenic function on the Euclidean ball $B_1(0)$ in $\R^{n+1}$.
We first see that the Cauchy integral formula above provides a bijection $v^{-1}:f\mapsto \tilde f$ from $\cM(B_1(0))$ onto $\cC\cR(L_{n+1})$.

\begin{xmp} \label{prp:Lieball}Let $n=1,2,\dots$ and  $B_1(0) = \{x\in\R^{n+1} : |x| < 1\}$. Then
$\kappa( B_1(0) ) = L_{n+1}$ where
\begin{equation}\label{eqn:Lie}
L_{n+1} = \{\z\in\C^{n+1} : |\z|^2 + \sqrt{|\z|^4-||\z|_\C^2|^2} < 1\ \}
\end{equation}
is the {\it Lie ball}\/ in $\C^{n+1}$. 

Let $\z\in \C^{n+1}$ and suppose that
$\z=\xi+i\eta$ with $\xi,\eta\in\R^{n+1}$. If $\eta = 0$, then
$\g_\C(\z) = \{\xi\}$ so that $B_1(0) \subset \kappa( B_1(0) ) $. Moreover,
$$L_{n+1}\cap(\{0\}\times\C^{n}) = \{0\}\times L_{n},\quad n=1,2,\dots\,.$$

To establish the identity $\kappa( B_1(0) ) = L_{n+1}$, suppose that $\eta\neq 0$.
According to conditions (\ref{odd}) and (\ref{even}),
the set
$\g_\C(\z)$ is an $(n-1)$-dimensional ball or sphere  with radius $|\eta|$ in $\R^{n+1}$, lying
in the hyperplane with normal
$\eta$ and passing through $\xi\in\R^{n+1}$. 

Let $0 \le \angle({\xi,\eta})\le \pi$ be the angle
between $\xi$ and $\eta$ in $\R^{n+1}$, that is $\langle\xi,\eta\rangle =
|\xi|.|\eta|\cos(\angle({\xi,\eta}))$. The projection of $\xi$ onto $\{\eta\}^\perp$ has length
$|\xi|\sin(\angle({\xi,\eta}))$ and the projection of $\xi$ onto $\eta$ has length
$|\xi|\cos(\angle({\xi,\eta}))$. 

The projection of $\g_\C(\z)$ onto $\{\eta\}^\perp$
is a ball or sphere whose maximum distance from the origin is
$|\xi|\sin(\angle({\xi,\eta})) + |\eta|$ in the direction of the projection
of $\xi$ onto $\{\eta\}^\perp$. Because $\{\eta\}^\perp$ is distant 
$|\xi||\cos(\angle({\xi,\eta}))|$ from the hyperplane in $\R^{n+1}$ in which $\g_\C(\z)$
lies, the maximum distance  from the origin of points belonging to $\g_\C(\z)$ is 
$$\sqrt{|\xi|^2\cos^2(\angle({\xi,\eta})) + (|\xi|\sin(\angle({\xi,\eta})) + |\eta|)^2},$$ so
\begin{eqnarray*}
&& \{\z\in\C^{n+1}: \g_\C(\z) \subset B_1(0) \}\\
&&\quad = \{\z\in\C^{n+1}:\z=\xi+i\eta,\ \eta\neq0,\ |\xi|^2+|\eta|^2 + 
2|\xi| |\eta|\sin(\angle({\xi,\eta})) < 1 \}\cup B_1(0)\cr
&&\quad = \{\z\in\C^{n+1}:\z=\xi+i\eta, \ |\xi|^2+|\eta|^2 + 2(|\xi|^2
|\eta|^2-\langle\xi,\eta\rangle^2)^\frac12 < 1
\} \cr
&&\quad =\{\z\in\C^{n+1} : |\z|^2 + \sqrt{|\z|^4-||\z|_\C^2|^2} < 1\ \} 
\end{eqnarray*}
is a connected open set and the equality (\ref{eqn:Lie}) follows.
Consequently, any left monogenic function $f:B_1(0)\longrightarrow\C_{(n)} $
has a unique complex left monogenic extension 
$\tilde f:L_{n+1}\longrightarrow\C_{(n)} $ to the Lie ball $L_{n+1}$ in $\C^{n+1}$
\cite[Proposition 7]{Ry1}. 
\end{xmp}
 
 \begin{rem} As noted above, 
 $$|\z|^2 + \sqrt{|\z|^4-||\z|_\C^2|^2} = |\xi|^2+|\eta|^2 + 2(|\xi|^2
|\eta|^2-\langle\xi,\eta\rangle^2)^\frac12,\quad\text{for } \z=\xi+i\eta,\ \xi,\eta\in\R^n,$$
and so $\|\z\|_{L_n} = \left(|\z|^2 + \sqrt{|\z|^4-||\z|_\C^2|^2}\right)^\frac12$ is a norm on $\C^n$
satisfying $$|\xi+i\eta| \le \|\xi+i\eta\|_{L_n} \le |\xi|+|\eta|,\quad \xi,\eta\in\R^n.$$
It turns out that $(\C^n,\|\cdot\|_{L_n})$ is the projective tensor product $\R^n\hat\otimes_\pi\C$
with respect to the corresponding Euclidean norms on $\R^n$ and $\C$ \cite[Equation (1.7)]{Mori1}, \cite[Theorem 3.17)]{Mori2}.
 \end{rem}

\section{Holomorphic functions on the Lie ball}
For a continuous function $f$ on the closed unit disk $D_1=\{z\in\C:|z|\le1\ \}$
that is holomorphic inside $D_1$, the Cauchy integral formula
$$f(z) =\frac1{2\pi i}\int_{S^1}\frac1{\s - z}f(\s)\,d\s,\quad z\in\C,\ |z| < 1,$$
represents $f$ in terms of an integral of its boundary values over the unit circle 
$$S^1=\{z\in\C:|z|=1\}.$$
More generally, there is a bijective correspondence between holomorphic functions $f$
defined on the open unit disk $\{z\in\C:|z| <1\}$ and \textit{hyperfunctions} $T(e^{i\theta})$
on $S^1$ whose Fourier coefficients satisfy $c_m = 0$ for $m < 0$. The \textit{Cauchy-Hua integral formula}
$$f(z) =\left(T(e^{i\theta}),(1-e^{-i\theta}z)^{-1}\right),\quad z\in\C,\ |z| < 1,$$
represents the holomorphic function in terms of its boundary values $T(e^{i\theta})$.

Following M. Morimoto \cite{Mori1,Mori2}, we see how both the correspondence  and the
Cauchy-Hua integral formula extend from the open unit disk in $\C$
to the Lie ball $L_n$ in $\C^n$. We use the Cauchy-Hua integral formula
for $\cH(L_n)$ to contruct the map $u:\cH(L_n)\to\cM(B_1(0))$ in the diagram
(\ref{diag}) above.

The Hilbert space $L^2(S^{n-1})$ with respect to the invariant Borel probability measure $\mu_{n-1}$
on $S^{n-1}$ has an orthogonal decomposition $L^2(S^{n-1}) =\oplus_{k=0}^\infty \cH^k(S^{n-1})$
into spaces $\cH^k(S^{n-1})$ spherical harmonics of degree $k=0,1,\dots$\,. Spherical harmonics on $S^n$
are systematically treat in \cite[Chapter 2]{Mori2}. Further results related to Clifford analysis
in the Lie ball are in \cite{Somm}.

The orthogonal projection
$T_k:L^2(S^{n-1})\to \cH^k(S^{n-1})$ is given by
$$(T_kf)(\om) = N(k,n)\int_{S^{n-1}} P_{k,n}(\langle \tau,\om\rangle)f(\tau)\,d\mu_{n-1}(\tau)$$
for $N(k,n) = \dim\cH^k(S^{n-1}) = \frac{(2k+n-2)(k+n-3)!}{k!(n-2)!}$. The \textit{Legendre polynomials} $P_{k,n}$
are defined by the generating formula
\begin{equation}\label{GF}
(1-2tr+r^2)^{-\frac{n-2}2} = \sum_{k=0}^\infty\frac{n-2}{2k+n-2}N(k,n)P_{k,n}(t)r^k
\end{equation}
for $n =3,4,\dots$ and $P_{k,2}(t) = \cos(k\cos^{-1}t)$, $k=0,1,\dots$  \cite[Equations (1.22), (1.23)]{Mori1}.
Note that in the monograph \cite{Mori2}, the $n$-sphere $S^n$ in $\R^{n+1}$ is considered and the corresponding notation is $N(k,n-1)$ and $P_{k,n-1}$ for the unit sphere $S^{n-1}$
in $\R^n$---we have opted for typographical convenience in the present context.

Let $\Sigma^n =\{e^{i\theta}\omega:\theta\in\R,\ \omega\in S^{n-1}\}$ be the \textit{Lie sphere}. 
The Hilbert space $L^2(\Sigma^n)$ defined with respect to the probability measure $\pi^{-1}d\theta d\mu_{n-1}(\omega)$ has an orthogonal decomposition
$L^2(\Sigma^n) =\oplus_{(m,k)\in\Lambda}^\infty \cH^{m,k}(\Sigma^n)$,
$\Lambda =\{m\equiv k \mod2,\ k=0,1,\dots \}$ with
$$\cH^{m,k}(\Sigma^n) = \{e^{im\theta}S_k: S_k\in \cH^k(S^{n-1})\ \},\quad\text{for }(m,k)\in\Lambda .$$

The orthogonal projection $T_{m,k}:L^2(\Sigma^n) \to \cH^{m,k}(\Sigma^n)$ is given by 
$$(T_{m,k}f)(e^{i\theta}\omega) = \pi^{-1}N(k,n)\int_0^\pi \int_{S^{n-1}}
e^{im(\theta-\phi)}P_{k,n}(\langle \tau,\om\rangle)f(e^{im\phi}\tau)\,d\phi d\mu_{n-1}(\tau).$$
For $f\in L^2(\Sigma^n)$, the function $S_{m,k}f\in \cH^k(S^{n-1})$ is defined by 
$(S_{m,k}f)(\om) = (T_{m,k}f)(\om)$ for $\om\in S^{n-1}$. Then $S_{m,k}f$ is the restriction to 
$S^{n-1}$ of the complex polynomial
$$(S_{m,k}f)(z) = \pi^{-1}N(k,n)\int_0^\pi \int_{S^{n-1}}
e^{-im\phi}P_{k,n}(\langle \tau,z\rangle)f(e^{im\phi}\tau)\,d\phi d\mu_{n-1}(\tau),\quad z\in\C^n.$$
The unique harmonic homogeneous polynomial of degree $k$ equal to $S_{m,k}f$ on $S^{n-1}$ is
$$(\tilde S_{m,k}f)(z)  = |z|_\C^k(S_{m,k}f)(z/|z|_\C),\quad  z\in\C^n.$$

Let $\Lambda_+ =\{(m,k)\in\Lambda: k\le m\}$. It turns out that
\begin{equation}\label{eqn:holo}
f(z) =\sum_{(m,k)\in\Lambda_+} |z|_\C^{m-k}(\tilde S_{m,k}f)(z),\quad z\in L_n,
\end{equation}
for every holomorphic function $f:L_n\to \C$ \cite[Equation (3.34)]{Mori1}. The sum converges uniformly
on every compact subset of the Lie ball $L_n$ in $\C^n$.

The space of real analytic functions on the analytic manifold $\Sigma^n$ is denoted by $\cA(\Sigma^n)$.
It has the inductive limit topology defined by holomorphic functions on open sets containing $\Sigma^n$ 
in $\C^n$. Elements of $\cA'(\Sigma^n)$ are called \textit{hyperfunctions} and the holomorphic function given by 
(\ref{eqn:holo}) has a trace $T_f(e^{i\theta}\om) \in \cA'(\Sigma^n)$ given by
$$T_f(e^{i\theta}\om) = \sum_{(m,k)\in\Lambda_+}e^{im\theta}(S_{m,k}f)(\om) $$
with respect to the bilinear form \cite[Equation (5.18)]{Mori1}, see \cite[Theorem 5.2]{Mori1} and \cite[Theorem 5.7]{Mori2} in the case of $L_{n+1}$. Furthermore, the \textit{Cauchy-Hua} formula
\begin{equation}\label{eqn:CauchyH}
f(z) =\left\langle|\om - e^{-i\theta}z|_\C^{-n},T_f(e^{i\theta}\om)\right\rangle,\quad z\in L_n,
\end{equation}
holds with respect to the bilinear pairing $\left\langle \cA(\Sigma^n),\cA'(\Sigma^n)\right\rangle$.

\begin{lem} There exists a unique $\C^{n+1}$-valued function 
$$(e^{i\theta}\om,z)\mapsto\widetilde{ |\om - e^{-i\theta}z|_\C^{-n}},\quad \om\in S^{n-1},\ \theta\in\R,z\in L_{n+1}$$
analytic on $\Sigma_n$ and complex regular on $L_{n+1}$ such that
$$\widetilde{|\om - e^{-i\theta}z|_\C^{-n}} = |\om - e^{-i\theta}z|_\C^{-n}e_0,\quad\text{for all } z\in L_n.$$
\end{lem}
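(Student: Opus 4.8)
The plan is to build $\widetilde{|\om-e^{-i\theta}z|_\C^{-n}}$ as the (complexified) Cauchy–Kowalewski extension of the Clifford‑scalar kernel, carried out term by term on its expansion into homogeneous pieces. Fix $e^{i\theta}\om\in\Sigma^n$ and put $k(z):=|\om-e^{-i\theta}z|_\C^{-n}$. Since $\om\in S^{n-1}$ has no $e_0$‑component and $|\om|_\C^2=|\om|^2=1$,
$$|\om-e^{-i\theta}z|_\C^2=\sum_{j=1}^n(\om_j-e^{-i\theta}z_j)^2=1-2e^{-i\theta}\langle\om,z\rangle+e^{-2i\theta}|z|_\C^2=1-2tr+r^2,$$
with $t=\langle\om,z\rangle/|z|_\C$ and $r=e^{-i\theta}|z|_\C$; substituting this into the generating identity (\ref{GF}) with $n$ replaced by $n+2$ (the coefficient $\tfrac{n}{2k+n}N(k,n+2)$ collapsing to $\binom{n+k-1}{k}$) gives
$$k(z)=\sum_{k=0}^{\infty}\binom{n+k-1}{k}e^{-ik\theta}\,Z_k(\om,z),\qquad Z_k(\om,z):=|z|_\C^{\,k}\,P_{k,n+2}\!\Big(\tfrac{\langle\om,z\rangle}{|z|_\C}\Big).$$
Expanding $P_{k,n+2}$ in powers of $t$, only exponents of the parity of $k$ occur, so the $|z|_\C$'s pair up into powers of $|z|_\C^2=z_1^2+\dots+z_n^2$ and each $Z_k(\om,\cdot)$ is a genuine homogeneous polynomial of degree $k$ on $\C^n$. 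Thus the displayed series is just the homogeneous Taylor expansion at $0$ of $k\in\cH(L_n)$ — here one must first check, by an elementary computation with $z=\xi+i\eta$, that $z\in L_n$ forces $|\om-e^{-i\theta}z|_\C^2\notin(-\infty,0]$ for every $\om\in S^{n-1}$ (the same fact that makes (\ref{eqn:CauchyH}) meaningful) — and, $L_n$ being balanced (the open unit ball of $\|\cdot\|_{L_n}$), the series converges locally uniformly on $L_n$.

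Now each scalar polynomial $Z_k(\om,\cdot)e_0$ on $\R^n$ has, as recalled in the Introduction, a unique left‑monogenic polynomial extension $\widetilde{Z_k(\om,\cdot)}$ to $\R^{n+1}$; it is homogeneous of degree $k$, it extends by analytic continuation of the coefficients to a left complex regular polynomial on all of $\C^{n+1}$, and — since in the Cauchy–Kowalewski series $\sum_{m\ge0}\frac{(-x_0)^m}{m!}\bigl(\sum_{j=1}^ne_j\pd_j\bigr)^m(Z_k(\om,\cdot)e_0)$ the even‑order terms are Clifford‑scalar and the odd‑order terms lie in $\C^n$ — it takes values in $\C^{n+1}\subset\Cn$ and restricts to $Z_k(\om,\cdot)e_0$ on $\{0\}\times\C^n$. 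I would then set
$$\widetilde{|\om-e^{-i\theta}z|_\C^{-n}}:=\sum_{k=0}^{\infty}\binom{n+k-1}{k}e^{-ik\theta}\,\widetilde{Z_k(\om,\cdot)}(z).$$
Granting convergence on $L_{n+1}$ (see below), this sum is $\C^{n+1}$‑valued and left complex regular in $z$; on the slice $\{0\}\times L_n=L_{n+1}\cap(\{0\}\times\C^n)$ it collapses term by term to $k(z)e_0$, giving the asserted identity; it is well defined on the Lie sphere $\Sigma^n$ rather than merely on $S^{n-1}\times\R$ because $Z_k(-\om,\cdot)=(-1)^kZ_k(\om,\cdot)$ cancels $e^{-ik\pi}=(-1)^k$; and, for each fixed $z$, being a locally uniform limit of maps polynomial in $\om$ and trigonometric in $\theta$, it is real‑analytic in $e^{i\theta}\om$ — indeed it extends holomorphically to a complex neighbourhood of $\Sigma^n$ — hence lies in $\cA(\Sigma^n)$. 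Uniqueness is automatic: $\pd_0=-\sum_{j=1}^ne_j\pd_j$ on left complex regular functions, so the whole $z_0$‑jet along $\{z_0=0\}$ is determined by the restriction to $\{z_0=0\}$; as $L_{n+1}$ is connected, a left complex regular function on $L_{n+1}$ vanishing on $L_n$ vanishes identically.

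The substantive point — and the expected main obstacle — is the convergence of the extended series on all of $L_{n+1}$. I would establish uniform sub‑exponential bounds $\sup_{|w|=1}\bigl|\widetilde{Z_k(\om,\cdot)}(w)\bigr|\le C\,k^{a}$, with $C,a$ independent of $\om\in S^{n-1}$, $w$ ranging over the Euclidean unit sphere of $\C^{n+1}$; combined with $\binom{n+k-1}{k}\le C'k^{\,n-1}$, the homogeneity of $\widetilde{Z_k(\om,\cdot)}$, and the fact that $|z|<1$ for every $z\in L_{n+1}$, this yields the locally uniformly summable majorant $\sum_k C''k^{\,n-1+a}|z|^{k}$ on $L_{n+1}$, and term‑by‑term complex regularity passes to the sum. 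Such bounds would come either from an explicit identification of the $\widetilde{Z_k(\om,\cdot)}$ as Gegenbauer‑type Clifford polynomials assembled from the monogenic variables $z_j=x_je_0-x_0e_j$ (for which the classical estimate $|P_{k,n+2}|\le1$ on $[-1,1]$ propagates under the Cauchy–Kowalewski map to a controlled bound on $S^n$), or — bypassing the explicit series — from the observation that $k\in\cH(L_n)$ makes its Clifford‑scalar trace on $B_1(0)\cap\R^n$ extend left‑monogenically to $B_1(0)\subset\R^{n+1}$, after which Example \ref{prp:Lieball} (the bijection $v^{-1}$, cf.\ \cite[Proposition~7]{Ry1}) pushes the extension out to $L_{n+1}$. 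In both routes the real work is the uniform control of the monogenic extensions of the zonal building blocks; everything else is bookkeeping.
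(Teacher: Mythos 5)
Your proposal follows essentially the same route as the paper: expand the kernel via the generating identity (\ref{GF}) into zonal harmonics $Z_k(\om,\cdot)=|\cdot|_\C^k P_{k,n+2}(\langle\om,\cdot/|\cdot|_\C\rangle)$, replace each by its unique $\C^{n+1}$-valued degree-$k$ monogenic extension $W_{k,n+2}(\om;\cdot)$, and obtain locally uniform convergence on $L_{n+1}$ from the homogeneity bound on the Lie norm together with a polynomial-in-$k$ estimate on $\sup_{x\in S^n}|W_{k,n+2}(\om;x)|$. Your bookkeeping---the collapse of the coefficient to $\binom{n+k-1}{k}$, the truncation of the Cauchy--Kowalewski series at order one because $(\sum_j e_j\pd_j)^2$ annihilates harmonics (so the extension is indeed $\C^{n+1}$-valued), the parity cancellation $Z_k(-\om,\cdot)=(-1)^kZ_k(\om,\cdot)$ against $e^{-ik\pi}$ that makes the kernel well defined on $\Sigma^n$, and uniqueness via $\pd_0=-\sum_j e_j\pd_j$ along $\{z_0=0\}$---is all correct and in places more explicit than the paper.

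The one place where you stop short of a proof is precisely where the paper does its real work: the uniform bound $\sup_{x\in S^n}|W_{k,n+2}(\om;x)|=O(k^2)$. You flag this as the substantive obstacle and outline two routes. Route (a)---using the Legendre bound $|P_{k,m}|\le1$ on $[-1,1]$ and the structure of the one-step extension---is exactly what the paper carries out: it writes $W_{k,n+2}(\om;x)=Z_k(\om;\bs x)+x_0\overline D_{\bs x}Z_k(\om;\bs x)$, differentiates the generating function to get $P_{k,n+2}'=\frac{k(k+n)}{n+1}P_{k-1,n+4}$, and concludes $|W_{k,n+2}(\om;x)|\le 1+k+2\frac{k(k+n)}{n+1}$ on $S^n$, which with $N(k,n+2)=O(k^n)$ and $\|\z\|_{L_{n+1}}<1$ gives convergence. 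So route (a) is not an alternative; it is the proof, and it needs to be carried out. Route (b), via $v^{-1}$ from Example \ref{prp:Lieball}, is less direct: before you can invoke $v^{-1}$ you must know that the Cauchy--Kowalewski extension of the trace $k|_{B_1(0)\cap\R^n}$ exists and is monogenic on the whole Euclidean ball $B_1(0)\subset\R^{n+1}$, which is not automatic for an arbitrary real-analytic function and would itself rest on essentially the same zonal estimates. In short: same decomposition, same key estimate, but the decisive bound is announced rather than derived.
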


\begin{proof} Formula (\ref{GF}) is valid for $r\in\C$, $|r| < 1$, so for $x = \tau r$ with $\tau\in S^{n-1}$ and $ 0 < r < 1$, we have
\begin{align*}
|\om - e^{-i\theta}\tau r|_\C^{-n} &= (1-2e^{-i\theta}\langle\om,\tau\rangle r+e^{-2i\theta}r^2)^{-\frac{n}2}\\
&= \sum_{k=0}^\infty\frac{n}{2k+n}N(k,n+2)P_{k,n+2}(\langle\om,\tau\rangle)e^{-ik\theta}r^k
\end{align*}
By analytic continuation in $x\in\R^n$ with $|x| < 1$, we obtain
\begin{align*}
|\om - e^{-i\theta}\z|_\C^{-n} 
&= \sum_{k=0}^\infty\frac{n}{2k+n}N(k,n+2)e^{-ik\theta}|\z|_\C^k P_{k,n+2}(\langle\om,\z/|\z|_\C\rangle)
\end{align*}
for all $\z\in\C^n$ such that $\|\z\|_{L_n} < 1$.

Each of the homogeneous harmonic polynomials $\z\mapsto |\z|_\C^k P_{k,n+2}(\langle\om,\z/|\z|_\C\rangle)$,
$\z\in\C^n$, has a homogeneous  \textit{monogenic} extension $\z\mapsto W_{k,n+2}(\om;\z)$ to $\C^{n+1}$ given by 
\begin{align*}
W_{k,n+2}(\om;\z) &= {\overline D_\z}\left(\z_0|\bs\z|_\C^k P_{k,n+2}(\langle\om,\bs\z/|\bs\z|_\C\rangle)\right)\\
&= |\bs\z|_\C^k P_{k,n+2}(\langle\om,\bs\z/|\bs\z|_\C\rangle)
+\z_0{\overline D_{\bs\z}}\left(|\bs\z|_\C^k P_{k,n+2}(\langle\om,\bs\z/|\bs\z|_\C\rangle)\right) ,
\end{align*}
for $\z=\z_0e_0+\bs\z\in\C^{n+1}$ with $\z_0\neq 0$, see \cite[Theorem 14.8]{BDS}.
Then by \cite[Lemma 3.24, Theorem 2.29]{Mori2} we have
\begin{equation}\label{Wbnd}
|W_{k,n+2}(\om;\z)| \le \|\z\|_{L_{k+1}}\sup_{x\in S^n}|W_{k,n+2}(\om;x)|.
\end{equation}

On $\R^{n+1}$, we have
$$W_{k,n+2}(\om;x) = |\bx| ^k P_{k,n+2}(\langle\om,\bx/|\bx| \rangle)
+x_0{\overline D_{\bx}}\left(|\bx| ^k P_{k,n+2}(\langle\om,\bx/|\bx| \rangle)\right) $$
for $x=x_0e_0+\bx\in \R^{n+1}$,  $|x|=1$, $k=1,2,\dots$\, .
Calculating the derivative gives
$${\overline D_{\bx}} |\bx| ^k = - k\bx|\bx| ^{k-2}$$
and
\begin{align*}
{\overline D_{\bx}} P_{k,n+2}(\langle\om,\bx/|\bx| \rangle)&= P_{k,n+2}'(\langle\om,\bx/|\bx| \rangle){\overline D_{\bx}}\langle\om,\bx/|\bx| \rangle \\
&= P_{k,n+2}'(\langle\om,\bx/|\bx| \rangle)\left(-\om/|\bx| +\langle\om,\bx \rangle \bx/|\bx|^3 \right).
\end{align*}
We note that
\begin{align*}
\frac d{dt}(1-2tr+r^2)^{-\frac{n}2} &=rn(1-2tr+r^2)^{-\frac{n+2}2} \\
&=n\sum_{k=0}^\infty\frac{n+2}{2k+n+2}N(k,n+4)P_{k,n+4}(t)r^{k+1}\\
&= \sum_{k=0}^\infty\frac{n}{2k+n}N(k,n+2)P_{k,n+2}'(t)r^{k},\quad 0 < r < 1,\\
\end{align*}
from which we deduce that
$$P_{k,n+2}' = (n+2)\frac{N(k-1,n+4)}{N(k,n+2)}P_{k-1,n+4}$$
where
$$\frac{N(k-1,n+4)}{N(k,n+2)} = \frac{k(k+n)}{(n+2)(n+1)},$$
so that
$$P_{k,n+2}' = \frac{k(k+n)}{n+1}P_{k-1,n+4}.$$

According to \cite[Theorem 2.29 (ii)]{Mori2}, the bound $-1 \le P_{j,m}(t)\le 1$ holds for all $-1\le t\le 1$, $j=0,1,\dots$
and $m=2,3,\dots$, which gives
$$|W_{k,n+2}(\om;x)|\le 1 + k + 2\frac{k(k+n)}{n+1},\quad x\in S^n.$$
Because
$\sup_{x\in S^n}|W_{k,n+2}(\om;x)| = O(k^2)$ as $k\to\infty$, we obtain the convergence of
$$\widetilde{ |\om - e^{-i\theta}\z|_\C^{-n}} = \sum_{k=0}^\infty\frac{n}{2k+n}N(k,n+2)e^{-ik\theta}W_{k,n+2}(\om;\z),$$
for all $\om\in S^{n-1}$,\ $\theta\in\R$ and $\z\in L_{n+1}$ from the bound (\ref{Wbnd})
and the estimate $N(k,n+2)= O(k^{n})$ \cite[Equation (2.6)]{Mori2}. 
\end{proof}

\begin{thm} The map $w:\cH(L_{n})\to \cC\cR(L_{n+1})$ in the diagram (\ref{diag}) is given by 
$$\big(w(f)\big)(\z)=  
\left\langle \widetilde{|\om - e^{-i\theta}\z|_\C^{-n}},T_f(e^{i\theta}\om)\right\rangle,\quad \z\in L_{n+1},$$
for each holomorphic function $f:L_n\to\C$.
\end{thm}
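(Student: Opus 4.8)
Write $g=g_f$ for the $\Cn$-valued function $g(\z)=\langle\,\widetilde{|\om-e^{-i\theta}\z|_\C^{-n}},\,T_f(e^{i\theta}\om)\rangle$, $\z\in L_{n+1}$; the plan is to show that $g\in\cC\cR(L_{n+1})$, that $g(\z)=f(\z)e_0$ for $\z\in\{0\}\times L_n$, and then to identify $g$ with $w(f)$ by invoking the bijectivity of $v$ (Section~3) together with the uniqueness of the monogenic extension. For the first point I would use the expansion obtained in the Lemma,
$$\widetilde{|\om-e^{-i\theta}\z|_\C^{-n}}=\sum_{k=0}^\infty\frac{n}{2k+n}\,N(k,n+2)\,e^{-ik\theta}\,W_{k,n+2}(\om;\z),$$
each $\z\mapsto W_{k,n+2}(\om;\z)$ being a homogeneous left complex regular polynomial on $\C^{n+1}$. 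The bounds $\sup_{x\in S^n}|W_{k,n+2}(\om;x)|=O(k^2)$ and $N(k,n+2)=O(k^n)$ from the proof of the Lemma, together with (\ref{Wbnd}), show that this series converges in $\cA(\Sigma^n)$ uniformly for $\z$ in compact subsets of $L_{n+1}$, so that pairing with the fixed functional $T_f\in\cA'(\Sigma^n)$ can be carried out term by term:
$$g(\z)=\sum_{k=0}^\infty\frac{n}{2k+n}\,N(k,n+2)\,\bigl\langle e^{-ik\theta}W_{k,n+2}(\om;\z),\,T_f(e^{i\theta}\om)\bigr\rangle .$$
Each summand is left complex regular in $\z$, since $D_\z$ acts only in the $\z$ variables and commutes with the pairing while $D_\z W_{k,n+2}(\om;\cdot)=0$; being a locally uniform limit of such functions, $g$ lies in $\cC\cR(L_{n+1})$.

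The second point is immediate from the Lemma and the Cauchy--Hua formula. For $\z\in\{0\}\times L_n$, identified with a point of $L_n$, the Lemma gives $\widetilde{|\om-e^{-i\theta}\z|_\C^{-n}}=|\om-e^{-i\theta}\z|_\C^{-n}e_0$, and, the pairing $\langle\cA(\Sigma^n),\cA'(\Sigma^n)\rangle$ being $\C$-bilinear with $T_f$ scalar-valued, the unit $e_0$ factors out:
$$g(\z)=\bigl\langle|\om-e^{-i\theta}\z|_\C^{-n},\,T_f(e^{i\theta}\om)\bigr\rangle\,e_0=f(\z)\,e_0$$
by (\ref{eqn:CauchyH}). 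Since $B_1(0)\cap\R^n\subset L_n$, it follows in particular that the restriction of $g$ to the hyperplane $\{x_0=0\}$ inside $B_1(0)$ coincides with the restriction of $\varphi:=f$ to $B_1(0)\cap\R^n$.

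To conclude, $B_1(0)\subset L_{n+1}$ by Example~\ref{prp:Lieball}, so by the first point $v(g)=g\restriction B_1(0)$ is a left monogenic function on $B_1(0)$ whose restriction to $B_1(0)\cap\R^n$ is $\varphi\restriction(B_1(0)\cap\R^n)$. The monogenic (Cauchy--Kowalewski) extension of a real analytic function on $B_1(0)\cap\R^n$ to the connected ball $B_1(0)$ is unique: a left monogenic $h$ satisfies $\pd_0h=-\sum_{j=1}^ne_j\pd_jh$, so the derivatives of $h$ of every order at the points of $\R^n$ are determined by $h\restriction\R^n$, and monogenic functions are real analytic. Hence $v(g)=f_\varphi=u(f)$, and since $v:\cC\cR(L_{n+1})\to\cM(B_1(0))$ is a bijection we obtain $g=v^{-1}(u(f))=w(f)$, as claimed. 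The one genuinely delicate step in this plan is the term-by-term passage of the pairing in the first paragraph; it hinges entirely on the convergence estimates already proved for the kernel $\widetilde{|\om-e^{-i\theta}\z|_\C^{-n}}$ in the Lemma, and once those are available the remainder is formal.
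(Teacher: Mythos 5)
Your proof is correct and follows essentially the same route as the paper's: expand the kernel via the Lemma, show that pairing against $T_f\in\cA'(\Sigma^n)$ produces a left complex regular function on $L_{n+1}$, recover $f$ on $L_n$ from the Cauchy--Hua formula, and identify the result with $w(f)$. The paper condenses all of this into two sentences, stating that the kernel is two-sided monogenic in $\cA(\Sigma^n)\otimes\Cn$ and invoking the Cauchy--Hua formula; your version spells out the term-by-term pairing and the uniqueness of the monogenic extension, which is useful detail but not a different argument.
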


\begin{proof} The function $\z\mapsto\widetilde{|\om - e^{-i\theta}\z|_\C^{-n}}$ is two-sided monogenic in the module $\cA(\Sigma_n)\otimes\Cn$,
so applying the continuous linear functional $T_f(e^{i\theta}\om)\in \cA'(\Sigma_n)$ ensures that $w(f)\in \cC\cR(L_{n+1})$.
The Cauchy-Hua formula gives $w(f) = f$ on $L_n$.
\end{proof}

\section{Application to Operator Theory}
Let $\bs A =(A_1,\dots,A_n)$ be an $n$-tuple of bounded linear operators acting on a Banach space $X$
and $\|\bA\|=\left(\|A_1\|^2+\dots+\|A_n\|^2\right)^\frac12$. For $\om\in \R^{n+1}$
and $|\om| >(1+\sqrt 2)\|\bs A\|$, let
$$G_\om(\bs A) = \frac1{\s_n}\sum_{k=0}^\infty\frac{(-1)^k}{k!} \langle \bs A,\nabla\rangle^k\overline D_\om \frac{1}{|\om|^{n-1}}.$$
If, for some $0 < \delta < 1$, the function $\om \mapsto G_\om(\bs A)$ is the restriction to $B_{1-\delta}(0)^c$ of a monogenic function,
also denoted by $\om\mapsto G_\om(\bs A)$,
then $\bs A =(A_1,\dots,A_n)$ has a unique holomorphic symmetric functional calculus on $L_n$. Moreover,
there exists $M_\d(n) > 0$ such that
$$\|p(\bs A)\|_{\cL(X)}\le M_\d(n) \|p\|_{L^\infty(S^{n-1})}$$
for every homogeneous polynomial $p$.

A given holomorphic function $\varphi\in \cH(L_n)$ has a monogenic counterpart $f_\varphi = u\varphi$ defined in $B_1(0) \subset\R^{n+1}$, according to diagram (\ref{diag}).
The continuous linear operator $\varphi(\bs A)\in \cL(X)$ is defined by 
$$\varphi(\bs A)e_0 = f_\varphi(\bs A) = \int_{rS^{n}}G_\om(\bs A)\bs n(\om)f_\varphi(\om)\,d\mu(\om)$$
for any $1-\d < r < 1$.

\bibliographystyle{plain}

\end{document}